\journal{Applied Mathematics Letters}
\newtheorem{Thm}{Theorem}[section]
\newtheorem{Lemma}{Lemma}[section]
\newtheorem{Remark}{Remark}[section]
\def\m{\mathbb}					\def\p{\partial}		     
					\def\lam{\lambda}		
		    	\def\v{\varepsilon}        
\def\a{\alpha}      	    	\def\b{\beta}
\begin{document}

\begin{frontmatter}

\title{Revisit to Fritz John's paper on the blow-up of nonlinear wave equations\tnoteref{a}}
\tnotetext[a]{The original paper is "Blow-up of solutions of nonlinear wave equations in three space dimensions, Manusctipta Mathematica, 28, 235-268 (1979)". See \cite{John}.}


\author{Xin Yang\corref{cor1}}
\ead{yangxin1@msu.edu}
\cortext[cor1]{Corresponding author}

\author{Zhengfang Zhou\corref{cor2}}
\ead{zfzhou@math.msu.edu}

\address{Department of Mathematics, Michigan State University, East Lansing, MI 48824, USA}

\begin{abstract}
In Fritz John's famous paper (1979), he discovered that for the wave equation $\Box u=|u|^p$, where $1<p<1+\sqrt{2}$ and $\Box$ denoting the d'Alembertian, there is no global solution for any nontrivial and compactly supported initial data. This paper is intended to simplify his proof by applying a Gronwall's type inequality.

\end{abstract}

\begin{keyword}
Blow-up \sep  Wave equations \sep Gronwall's type inequality  



\end{keyword}

\end{frontmatter}


\section{Introduction}
In 1979, Fritz John published his pioneering work \cite{John}, which was the first one that discovered the critical power of the blow-up phenomenon for wave equations. After this article, many people worked on this kind of blow-up problem. For details and many other related references, see \cite{Sideris, Glassey 1, Glassey 2, Strauss, GLS, LS, Kato, LZ, JZ, Schaeffer, YZ, Zhou}. These work generalize the critical power to other dimensions, some of them also provide simpler proof by imposing additional assumptions or by applying different methods.
 
The paper \cite{John} claims the following well-known Theorem.
\begin{Thm}\label{mainthm}
Let $\phi:\m{R}\rightarrow\m{R}$ be a continuous function which satisfies \[\phi(0)=0,\quad \limsup_{s\rightarrow 0}\,\phi(s)/|s|<\infty.\] Moreover, suppose there exists $A>0$ and $1<p<1+\sqrt{2}$ such that for all $s\in\m{R}, \phi(s)\geq A|s|^{p}$. Then for any function $u\in C^2\big(\m{R}^{3}\times[0,\infty)\big)$ that solves
\begin{eqnarray}
\left\{\begin{array}{lll}\label{Prob}
\Box u(x,t)=\phi\big(u(x,t)\big) &\quad\text{for}\quad & x\in\m{R}^{3},\; t\in(0,\infty), \\
u(x,0)=f(x) &\quad\text{for}\quad & x\in\m{R}^3,\\
u_{t}(x,0)=g(x) &\quad\text{for}\quad & x\in\m{R}^3,
\end{array}\right.
\end{eqnarray}
where $f\in C^{3}(\m{R}^{3}), g\in C^{2}(\m{R}^{3})$ and both of them have compact support, we have
$u\equiv 0$ in $\m{R}^{3}\times[0,\infty)$. Here $\Box$ denotes the d'Alembertian operator: $\Box=\frac{\p^2}{\p t^2}-
\sum\limits_{i=1}^{3}\frac{\p^2}{\p x_{i}^2}$.
\end{Thm}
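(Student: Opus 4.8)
The plan is to argue by contradiction: assuming $u\in C^{2}(\m{R}^{3}\times[0,\infty))$ solves \eqref{Prob} with $u\not\equiv0$, I will build a nonnegative functional of $u$, show it obeys a nonlinear integral inequality of Gronwall type, and then show that such an inequality — with the weight exponent dictated by the light‑cone geometry — has no solution existing for all $t$ when $1<p<1+\sqrt2$. Throughout, two structural facts are used. First, finite speed of propagation: fixing $\rho>0$ with $\operatorname{supp}f\cup\operatorname{supp}g\subseteq\{|x|\le\rho\}$, one has $u(\cdot,t)\equiv0$ outside $\{|x|\le t+\rho\}$. Second, the Kirchhoff representation $u=u_{0}+N[u]$, where $u_{0}$ is the free solution with data $(f,g)$ and $N[u](x,t)=\frac{1}{4\pi}\int_{0}^{t}(t-s)^{-1}\int_{|y-x|=t-s}\phi(u(y,s))\,dS_{y}\,ds$; since the forward fundamental solution of $\Box$ in three space dimensions is a nonnegative measure carried by the light cone and $\phi(s)\ge A|s|^{p}\ge0$, this gives $N[u]\ge0$ and hence the one‑sided bound $u\ge u_{0}$ everywhere. (The conditions $\phi(0)=0$ and $\limsup_{s\to0}\phi(s)/|s|<\infty$ enter only near $t=0$: they make $|\phi(u)|\lesssim|u|$ while $u$ is small, which both forces $u\equiv0$ in the degenerate case $f=g=0$, via a linear Gronwall estimate on $\|u(\cdot,t)\|_{\infty}$, and supplies a positive lower bound for $u$ on a small space–time ball, serving as the base case of the iteration below.)

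The basic functional is $F(t)=\int_{\m{R}^{3}}u(x,t)\,dx$. Integrating $\Box u=\phi(u)$ in $x$ removes the Laplacian by compact support, so $F\in C^{2}$ and
\[
F''(t)=\int_{\m{R}^{3}}\phi\big(u(x,t)\big)\,dx\ \ge\ A\int_{\m{R}^{3}}|u(x,t)|^{p}\,dx\ \ge\ 0 .
\]
Applying H\"older's inequality over the ball $\{|x|\le t+\rho\}$, of volume $\sim(1+t)^{3}$, converts this into
\[
F''(t)\ \ge\ c_{0}\,(1+t)^{-3(p-1)}\,|F(t)|^{p},\qquad c_{0}=c_{0}(A,p,\rho)>0 ,
\]
or, after integrating twice, the Volterra inequality $F(t)\ge F(0)+tF'(0)+c_{0}\int_{0}^{t}(t-s)(1+s)^{-3(p-1)}|F(s)|^{p}\,ds$. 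Moreover $u\not\equiv0$ forces $F\not\equiv0$ (if $F\equiv0$ then $F''\equiv0$, so $\phi(u)\equiv0$, so $u\equiv0$); combining this with convexity of $F$ and with the sign information $u\ge u_{0}$, $N[u]\ge0$, one shows that $F$ is eventually positive and nondecreasing, and in fact $F(t)\ge c_{1}(1+t)$ for $t$ large. For $p$ small this already suffices: a linear lower bound inserted into the displayed differential inequality self‑improves under the weight $(1+t)^{-3(p-1)}$ precisely when $p<2$, and then a Kato/Gronwall‑type lemma — multiply by $F'$, integrate, and compare with an autonomous inequality $y''\ge cy^{p}$ once $F$ is large — makes $F$ become infinite at a finite time.

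The main obstacle is the range $2\le p<1+\sqrt2$, where a linear lower bound no longer self‑improves and one must inject better pointwise information on $u$ into the functional. Here I would return to $u\ge N[u]$ and $\phi(u)\ge A|u|^{p}$: starting from the base‑case positivity of Step 1 and from the lower bound on $\int|u|^{p}\,dx$ supplied by the displayed inequality, estimate $N[u](x,t)$ from below along the ray $x=0$ — parametrising the backward light spheres by their radii — to get $u(0,t)\gtrsim(1+t)^{-1}$, propagate this to $u(x,t)\gtrsim(1+t)^{-1}$ on a fixed interior subcone, and re‑insert the improved decay into $\int|u|^{p}\,dx$, upgrading the lower bound on $F$ to a genuine power $(1+t)^{b}$ with $b>1$. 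Iterating this coupling — encoded uniformly by the single Gronwall‑type inequality $y(t)\ge\delta+c\int_{T}^{t}(t-s)(1+s)^{-3(p-1)}y(s)^{p}\,ds$ for the running averaged quantity $y$ — the admissible lower‑bound exponent obeys a recursion whose fixed point is finite exactly when the quadratic $p^{2}-2p-1$ is negative, i.e. when $p<1+\sqrt2$ (for $p\ge1+\sqrt2$ the iteration stalls, consistently with the existence of small global solutions). I expect the genuinely delicate point to be making this bootstrap quantitatively efficient enough to reach $1+\sqrt2$ rather than merely $p<2$: that requires controlling the spherical Kirchhoff integrals of $|u|^{p}$ appearing in $N[u]$ by the solid space–time integrals over the light cone, while keeping every exponent in the range where the Gronwall‑type lemma is applicable.

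Finally, in the range $p<1+\sqrt2$ the iteration drives the lower‑bound exponent of $F$ strictly above the critical value $\frac{3p-5}{p-1}$, at which point the Gronwall‑type (equivalently, the differential) inequality for $F$ has no global solution, so $F$ must reach $+\infty$ at some finite $T^{\ast}$. This contradicts $u\in C^{2}(\m{R}^{3}\times[0,\infty))$; therefore $u\equiv0$.
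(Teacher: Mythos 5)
Your plan is, in essence, the Kato/Sideris functional method: take $F(t)=\int_{\m R^3}u(x,t)\,dx$, derive the convexity differential inequality $F''\ge c\,(1+t)^{-3(p-1)}|F|^p$ from H\"older over the light cone, and then try to iterate a power-type lower bound for $F$ up to the threshold $1+\sqrt 2$. That is a genuinely different route from the paper, which never integrates $u$ over the whole ball. Instead the paper first reduces Theorem~\ref{mainthm} to the support statement of Theorem~\ref{keythm}, then works with the \emph{radial average} $\bar u(r,t)$ in null coordinates, builds the one-variable functional $H(r)=\int_{t^*}^r(r-t)^{q}\,\bar u\big(\tfrac{r-t}{2},\tfrac{r+t}{2}\big)\,dt$ with $q=p/(p-1)$, and closes in one stroke with the Gronwall-type Lemma~\ref{intineqlemma}. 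The whole point of the paper is to \emph{avoid} the exponent iteration; your proposal reinstates it, so you are essentially re-deriving John's (or Sideris's) original scheme rather than simplifying it.

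Beyond the difference in spirit, there are concrete gaps in the sketch. First, the pointwise base case is asserted but not established: you claim $u(0,t)\gtrsim(1+t)^{-1}$ on a fixed interior subcone, but the bound that actually follows from the positive Kirchhoff operator after one iteration (the analogue of the paper's inequality $\bar u(r,t)\ge C_0(t+r)^{1-p}$ on $\Sigma$) is $(t+r)^{1-p}$ in the interior; the rate $(t+r)^{-1}$ is only available in a thin annulus near the light cone ($Q$ in the paper's notation), not on a full interior cone. For $p>2$ that distinction matters, and your bootstrap is built on the stronger, unproved bound. Second, the step from ``$u\not\equiv0$'' to ``$F$ is eventually positive and $F(t)\ge c_1(1+t)$'' needs the sign information $u\ge0$ inside a forward cone, which in three dimensions comes from Huygens' principle killing $u_0$ there, not merely from $N[u]\ge0$ and $u\ge u_0$ (since $u_0$ may change sign); you gesture at this but do not carry it out. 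Third, the iteration is never written down, so the assertion that its fixed-point condition is exactly $p^2-2p-1<0$ remains a claim rather than a derivation; the naive Kato iteration with only $F\ge c_1(1+t)$ as input closes at $p<2$, and pushing to $p<1+\sqrt2$ requires precisely the extra pointwise input that is missing. Finally, you do not address the clean reduction the paper makes: Theorem~\ref{keythm} gives $\operatorname{supp} u\subset\Gamma^-({\bf 0},\rho)$, and then a time-reversal plus the linear growth hypothesis $\limsup_{s\to0}\phi(s)/|s|<\infty$ and an energy estimate yield $u\equiv0$; proving $u\equiv0$ directly from a blow-up dichotomy, as you attempt, forces you to extract quantitative positivity from the mere nonvanishing of $u$, which is exactly the subtle point the paper's two-step structure is designed to sidestep.
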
 

The key step to prove this Theorem is the statement as following.

\begin{Thm}\label{keythm}
Let $A>0$, $1<p<1+\sqrt{2}$, let $u$ be a $C^{2}\big(\m{R}^3\times[0,\infty)\big)$ solution of 
$\Box u\geq A|u|^{p}$.  Moreover, suppose there exists a point $(x^{0},t_{0})\in\m{R}^{4}$ such that 
$u^{0}(x,t)\geq 0$ for $(x,t)\in\Gamma^{+}(x^{0},t_{0})$, then $u$ has compact support and 
$\text{supp}\; u\subset\Gamma^{-}(x^{0},t_{0}).$
\end{Thm}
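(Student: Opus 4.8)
The plan is to argue by contradiction, turning the statement into a finite-time blow-up phenomenon for a single scalar quantity built from $u$, which is then excluded by the Gronwall-type inequality. The first reduction is to show that $u\equiv 0$ on $\Gamma^{+}(x^{0},t_{0})$; granting this, the inclusion $\text{supp}\,u\subset\Gamma^{-}(x^{0},t_{0})$, and hence the compactness of $\text{supp}\,u$, should follow from the finite speed of propagation. So suppose instead that $u\not\equiv 0$ on $\Gamma^{+}(x^{0},t_{0})$; since $u\ge 0$ there, we may fix a point $(y_{0},s_{0})$ in the interior of $\Gamma^{+}(x^{0},t_{0})$ with $u(y_{0},s_{0})>0$.

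Next I would localise the nonlinear inequality to the cone. On $\Gamma^{+}(x^{0},t_{0})$ one has $|u|^{p}=u^{p}$, and there are two essentially equivalent ways to exploit this. Using the Kirchhoff representation and the positivity of the forward fundamental solution $E_{+}$ of $\Box$ in three space dimensions gives, on $\Gamma^{+}(x^{0},t_{0})$, a lower bound of the shape $u\ge(\text{linear part})+A\,E_{+}*u^{p}$ with $E_{+}\ge 0$. The cleaner route is to pass to the spherical mean $\bar u(r,t)$ of $u$ over $\{|x-x^{0}|=r\}$ and to set $w=r\bar u$: the radial reduction of the wave operator together with Jensen's inequality (legitimate since $u\ge 0$ on the whole sphere whenever $r\le t-t_{0}$) yields
\[ (\p_{t}^{2}-\p_{r}^{2})\,w\ \ge\ A\,r^{1-p}\,w^{p}\qquad\text{on}\quad\{0\le r\le t-t_{0}\}, \]
with $w\ge 0$, $w(0,t)=0$, and $w>0$ at a point inherited from $(y_{0},s_{0})$. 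Integrating this inequality over backward characteristic regions and using the boundary behaviour of $w$ ($w\ge 0$ on the mantle $r=t-t_{0}$ and $w=0$ on the axis) produces a self-reproducing lower bound, schematically $w(r,t)\gtrsim\iint(r')^{1-p}\,w(r',t')^{p}$.

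From there I would integrate against a well-chosen power weight to obtain, for a quantity such as $\Phi(t)=\int_{0}^{t-t_{0}}r^{\sigma}w(r,t)\,dr$, a Gronwall-type inequality of the form
\[ \Phi(t)\ \gtrsim\ \int_{s_{0}}^{t}(t-s)\,s^{-\gamma}\,\Phi(s)^{p}\,ds,\qquad t\ge s_{0}, \]
with $\Phi\ge 0$, $\Phi\not\equiv 0$, and $\gamma=\gamma(p)$. The Gronwall-type inequality -- the device on which the paper rests -- then asserts that no such $\Phi$ can remain finite on all of $[s_{0},\infty)$ when $1<p<1+\sqrt2$: iterating the right-hand side produces lower bounds whose amplitude diverges at a finite time. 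Since $u\in C^{2}(\m{R}^{3}\times[0,\infty))$ makes $\Phi$ finite for every $t$, this is a contradiction, so $u\equiv 0$ on $\Gamma^{+}(x^{0},t_{0})$ and the theorem follows.

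The main obstacle is the passage to the sharp exponent. A crude estimate -- treating $u$ as spread over a ball of radius $\sim t$, i.e.\ inserting a full power $t^{3(p-1)}$ in the denominator -- only closes the iteration for $p<2$. To reach the whole range $1<p<1+\sqrt2$ one must use that $u$ concentrates in a thin layer near the light cone $r=t-t_{0}$, where $w=r\bar u$ does not decay; this is precisely what the weight $r^{1-p}$ records, and one has to iterate while tracking \emph{both} the growing amplitude and the geometry of that layer. Balancing the two is what produces the quadratic condition $p^{2}-2p-1<0$, whose positive root is $1+\sqrt2$. Arranging the weight $\sigma$ and the kernel so that the Gronwall-type inequality blows up exactly on this range is the heart of the matter, and the point where the present approach is meant to be more economical than John's original iteration.
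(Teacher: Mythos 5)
Your first reduction is not sound. You claim that once $u\equiv 0$ on $\Gamma^{+}(x^{0},t_{0})$ is established, $\text{supp}\,u\subset\Gamma^{-}(x^{0},t_{0})$ ``follows from finite speed of propagation.'' It does not: the complement of $\Gamma^{+}\cup\Gamma^{-}$ is the spacelike region $\{|x-x^{0}|>|t-t_{0}|\}$, and knowing $u$ vanishes on a forward cone says nothing there (a free wave with data supported far from $x^{0}$ already shows this). The theorem's force is precisely that $u$ must vanish on that spacelike region too. The paper handles it by starting from an \emph{arbitrary} $(x^{1},t_{1})\notin\Gamma^{-}({\bf 0},t_{0})$ with $u(x^{1},t_{1})\neq 0$, setting $t_{2}=t_{1}+|x^{1}|>t_{0}$, and observing that the point $(|x^{1}|,t_{1})$ lies on the boundary of $R_{\delta,t_{2}+\delta}$, so the Duhamel representation (\ref{inhomo. soln. rep. by P}) combined with the positivity of $P$ forces $\bar u(\delta,t_{2}+\delta)>0$. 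This is how the influence of a spacelike-separated nonzero point is imported into $\Gamma^{+}({\bf 0},t_{2})\subset\Gamma^{+}({\bf 0},t_{0})$, where $u^{0}\geq 0$ is available. Your version, which fixes a point inside the forward cone with $u>0$, misses this step entirely.

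The functional you propose also diverges from the paper in a way that matters. Your $\Phi(t)=\int_{0}^{t-t_{0}}r^{\sigma}w(r,t)\,dr$ leads (as you write it) to $\Phi(t)\gtrsim\int_{s_{0}}^{t}(t-s)\,s^{-\gamma}\Phi(s)^{p}\,ds$, whose kernel contains the factor $(t-s)$; that is a \emph{second-order} (Kato/Sideris-type) ODE inequality after differentiating twice, not the first-order inequality (\ref{intineq}) to which Lemma \ref{intineqlemma} applies. The paper avoids the extra kernel factor by passing to characteristic coordinates ($F(\a,\b)=\bar u(\tfrac{\a-\b}{2},\tfrac{\a+\b}{2})$), weighting by $(r-t)^{q}$ with $q=p/(p-1)$, integrating in $t$ to form $H(r)=\int_{t^{*}}^{r}G(r,t)\,dt$, and using Fubini plus the elementary convexity bound $(r-\b)^{q}-(r-\a)^{q}\geq(\a-\b)^{q}$ to collapse the double integral into $\int_{t^{*}}^{r}H^{p}(\a)(\a-t^{*})^{2-2p}\,d\a$ -- a clean instance of (\ref{intineq}). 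Your final paragraph, in which you say one ``has to iterate while tracking both the growing amplitude and the geometry of that layer,'' is a fair description of John's original induction, but it is exactly what this paper is constructed to avoid: after one application of the pointwise bound (\ref{pointestcov}) and the $\v$-splitting $H^{p}=H^{1+\v}H^{p-1-\v}$ borrowed from Sideris, Lemma \ref{intineqlemma} finishes the argument in a single stroke, with the sharp range appearing as $-p^{2}+2p>-1$.
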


\begin{Remark}\label{Decomp. of u}
If $u$ satisfies $\Box u=w$ with initial data $f$ and $g$, then one decomposes $u$ by $u=u^0+u^1$, where $u^{0}$ solves
\begin{eqnarray}\left\{\begin{array}{lll}
\Box u^0(x,t)=0 &\quad\text{for}\quad & x\in\m{R}^{3},\; t\in(0,\infty), \\
u^0(x,0)=f(x) &\quad\text{for}\quad & x\in\m{R}^3,\\
u^0_{t}(x,0)=g(x) &\quad\text{for}\quad & x\in\m{R}^3,
\end{array}\right.
\end{eqnarray}
and $u^{1}$ suffices
\begin{eqnarray}\left\{\begin{array}{lll}
\Box u^1(x,t)=w(x,t) &\quad\text{for}\quad & x\in\m{R}^{3},\; t\in(0,\infty), \\
u^{1}(x,0)=u^{1}_{t}(x,0)=0 &\quad\text{for}\quad & x\in\m{R}^3.
\end{array}\right.
\end{eqnarray} 

\end{Remark}

\begin{Remark}
For any $(x^{0},t_{0})\in\m{R}^{4}$, the forward cone $\Gamma^{+}(x^{0},t_{0})$ and the backward cone $\Gamma^{-}(x^{0},t_{0})$ are defined as $\,\Gamma^{+}(x^{0},t_{0})
=\{(x,t):|x-x^{0}|\leq t-t_{0}, t\geq 0\}$ and $\,\Gamma^{-}(x^{0},t_{0})
=\{(x,t):|x-x^{0}|\leq t_{0}-t, t\geq 0\}$.
\end{Remark}

In the proof of Theorem \ref{keythm}, \cite{John} employs a technical induction which requires complicated calculations. By introducing a suitable nonlinear functional, this paper gives a much more succinct proof which follows a Gronwall's type inequality.

The organization of this paper is as following: In Section \ref{Preliminaries}, it is shown how Theorem \ref{keythm} implies Theorem \ref{mainthm}, the argument is from \cite{John}. In addition, some notations and a basic Lemma are introduced, where the Lemma is the key technique used in Section \ref{Using G's type ineq.}. In Section \ref{proof of keythm}, we prove Theorem \ref{keythm}. More precisely, Section \ref{Set-up steps}, the first part of the proof, follows from \cite{John} with modifications while Section \ref{Using G's type ineq.}, the rest part of the proof, comes from our own observation. 

\section{Preliminaries}\label{Preliminaries}
\subsection{Theorem \ref{keythm} implies Theorem \ref{mainthm}}
\begin{proof}
Firstly, one can assume that both $f$ and $g$ have support in  $B({\bf 0},\rho)\triangleq \{x\in\m{R}^{3}:|x|<\rho\}$, then by Huygens' principle, $u^0\equiv 0$ in $\Gamma^{+}({\bf 0},\rho)$.  It follows from Theorem \ref{keythm} that $\text{supp}\;\,u\subset\Gamma^{-}({\bf 0},\rho)$. Secondly, one considers the function $v(x,t)\triangleq u(x,\rho-t)$ for $x\in\m{R}^{3}, 0\leq t\leq \rho$. Using the assumptions on $\phi$ in Theorem \ref{mainthm}, one can see $|\Box v|\leq M|v|$ for some fixed $M$ depending on $u$ and $\phi$. Then by energy estimate, $v\equiv 0$ in  $\Gamma^{-}({\bf 0},\rho)$. Thus Theorem \ref{mainthm} is verified.
\end{proof}

\subsection{Some Notations}

For any function $h:\m{R}^3\times[0,\infty)\rightarrow\m{R}$, its radial average function (with respect to spatial variable) $\bar{h}$ is defined by
\begin{equation}\label{average}
\bar{h}(r,t)=\frac{1}{4\pi}\int_{|\xi|=1}h(r\xi,t)\,dS_{\xi},\quad\forall\,(r,t)\in [0,\infty)\times[0,\infty).
\end{equation}

Now let's calculate the radial average of the solution $u$ to the wave equation $\Box u=w$ with zero initial data. To do so, one defines  $v:[0,\infty)\times[0,\infty)\rightarrow\m{R}$ by $v(r,t)=r\,\bar{u}(r,t)$, then we will have
\[\left\{\begin{array}{lll}
\Box v(r,t)=r\,\overline{w}(r,t) &\quad\text{for}\quad & r\in[0,\infty),\;t\in[0,\infty),\\
v(r,0)=0,\;v_{t}(r,0)=0 &\quad\text{for}\quad & r\in[0,\infty).
\end{array}\right.\]
Hence 
\[v(r,t)=\frac{1}{2}\int_{0}^{t}\int_{|r-t+s|}^{r+t-s}\lambda\,\overline{w}(\lambda,s)\,d\lambda\,ds,\]
which implies
\begin{equation}\label{radial ave. of inhomo. soln}
\bar{u}(r,t)=\frac{1}{2r}\int_{0}^{t}\int_{|r-t+s|}^{r+t-s}\lambda\,\overline{w}(\lambda,s)\,d\lambda\,ds.
\end{equation}

For convenience, one defines the operator $P$ acting on $\sigma(r,t)$ with domain $[0,\infty)\times[0,\infty)$ by 
\begin{equation}\label{P operator}
P\sigma(r,t)=\iint\limits_{R_{r,t}}\frac{\lam}{2r}\,\sigma(\lam,s)\,d\lam\,ds,\quad (r,t)\in [0,\infty)\times[0,\infty),
\end{equation}
where $R_{r,t}=\{(\lam,s):0\leq s\leq t, |r-t+s|\leq \lam \leq r+t-s\}$ (See Figure \ref{fig first}).
\begin{figure}[htbp]
\centering
\includegraphics[width=0.75\linewidth]{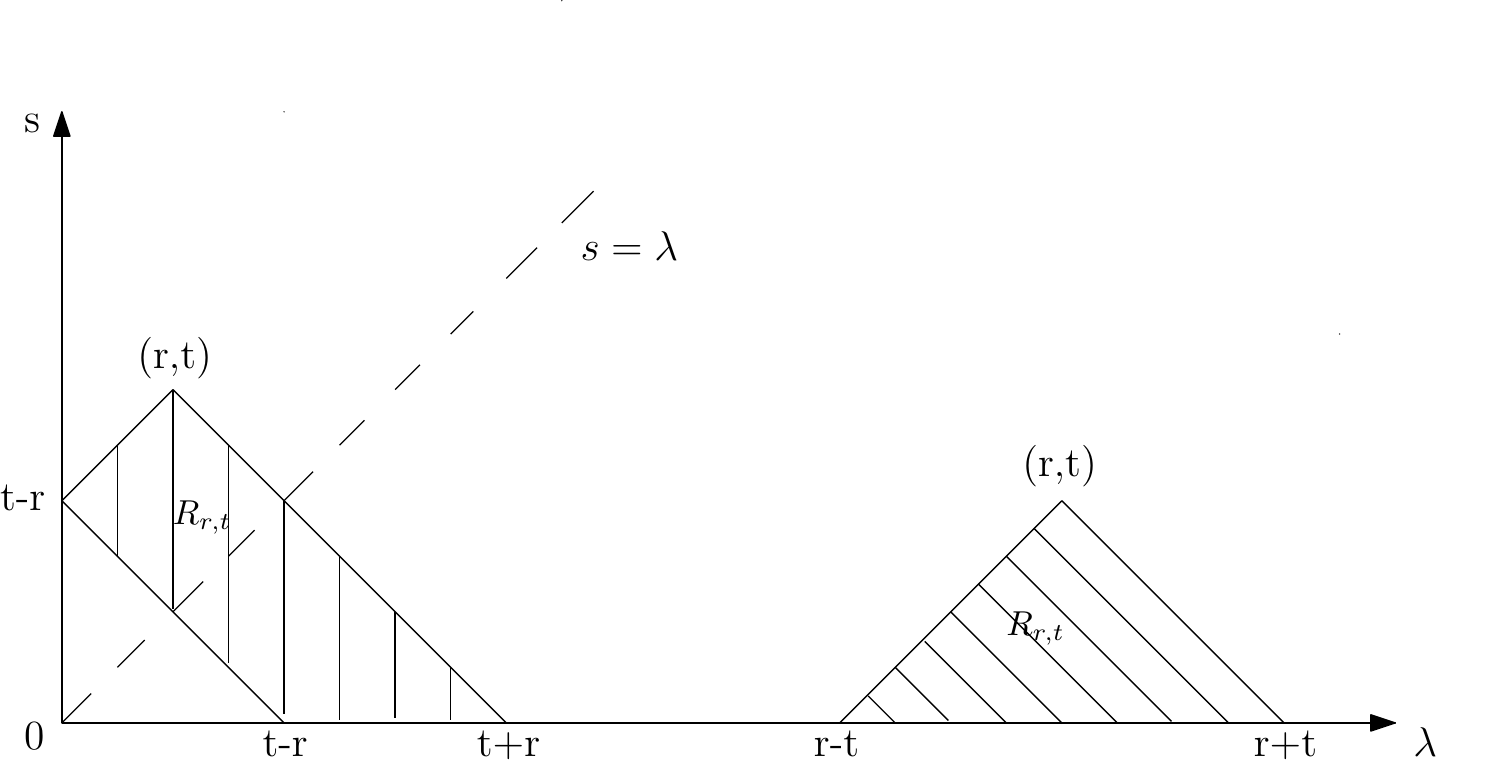}
\caption{$R_{r,t}$}
\label{fig first}
\end{figure}
It is clear that $P$ is a positive operator and now we can rewrite (\ref{radial ave. of inhomo. soln}) as 
\begin{equation}\label{inhomo. soln. rep. by P}
\bar{u}(r,t)=P\overline{w}(r,t),\quad (r,t)\in[0,\infty)\times[0,\infty),
\end{equation}
whenever $u$ solves $\Box u=w$ with initial data.

\subsection{A basic Lemma}
Next, we introduce a  Lemma which is a generalized Gronwall's type inequality with weight.

\begin{Lemma}\label{intineqlemma}
Let $t_0,t_1\in\m{R}$ with $t_0\leq t_1$, suppose $H:[t_0,\infty)\rightarrow[0,\infty)$ to be continuous and $H(r)>0$ for any
$r>t_1$. Then there does not exist constants $C>0, a>1, b\geq -1$ such that 
\begin{equation}\label{intineq}
H(r)\geq C\int_{t_1}^{r}H^{a}(\a)(\a-t_0)^{b}\,d\a,\qquad\forall\, r\geq t_1.
\end{equation}
\end{Lemma}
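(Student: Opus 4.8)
The plan is to argue by contradiction through a Bihari/Gronwall-type comparison, showing that an inequality of the form (\ref{intineq}) would force $H$ to become infinite in finite time, which is impossible since $H$ is a finite continuous function on all of $[t_0,\infty)$. Suppose such constants $C>0$, $a>1$, $b\geq -1$ exist, and set
\[
G(r)=C\int_{t_1}^{r}H^{a}(\a)(\a-t_0)^{b}\,d\a,\qquad r\geq t_1 .
\]
Since the left-hand side of (\ref{intineq}) is finite, the integral defining $G$ converges for every $r$ (this is the only place where the edge case $t_1=t_0$, $b<0$ needs a word: (\ref{intineq}) itself guarantees the improper integral at the left endpoint converges). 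Thus $G$ is well defined, nonnegative, nondecreasing, with $G(t_1)=0$ and $H(r)\geq G(r)$ for all $r\geq t_1$. Moreover, because $H$ is continuous and positive on $(t_1,\infty)$ while $(\a-t_0)^{b}>0$ there (as $\a>t_1\geq t_0$), the integrand is strictly positive on $(t_1,r]$, so $G(r)>0$ for every $r>t_1$.

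Next I would differentiate and separate variables. On $(t_1,\infty)$ the integrand is continuous, hence $G$ is $C^{1}$ there with
\[
G'(r)=C\,H^{a}(r)(r-t_0)^{b}\;\geq\;C\,G^{a}(r)(r-t_0)^{b},
\]
using $H\geq G\geq 0$ and monotonicity of $x\mapsto x^{a}$ on $[0,\infty)$. Fixing any $r_1>t_1$, we have $G(r_1)>0$, so $G(s)\geq G(r_1)>0$ for $s\geq r_1$; dividing the differential inequality by $G^{a}$ and integrating from $r_1$ to an arbitrary $r\geq r_1$ yields
\[
C\int_{r_1}^{r}(s-t_0)^{b}\,ds\;\leq\;\int_{r_1}^{r}\frac{G'(s)}{G^{a}(s)}\,ds\;=\;\frac{1}{a-1}\Big(\frac{1}{G^{a-1}(r_1)}-\frac{1}{G^{a-1}(r)}\Big)\;\leq\;\frac{1}{(a-1)\,G^{a-1}(r_1)} .
\]

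The contradiction then comes from letting $r\to\infty$: since $b\geq -1$, the left-hand side diverges — it equals $\dfrac{(r-t_0)^{b+1}-(r_1-t_0)^{b+1}}{b+1}\to\infty$ when $b>-1$, and $\ln\dfrac{r-t_0}{r_1-t_0}\to\infty$ when $b=-1$ — whereas the right-hand side is a fixed finite constant. Hence no such $C$, $a$, $b$ can exist. I expect the computation to be entirely routine; the only point requiring care is the behaviour near the left endpoint when $t_1=t_0$ and $b<0$, which is handled by running the differentiation and the comparison on the open interval $(t_1,\infty)$ only, starting from a point $r_1>t_1$, and by noting that (\ref{intineq}) already presupposes the integral is finite.
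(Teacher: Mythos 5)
Your proof is correct and follows essentially the same route as the paper: define the auxiliary function equal to the right-hand-side integral, differentiate, use the hypothesis to obtain a separable differential inequality $G' \geq C\,G^{a}(r-t_0)^{b}$, and integrate from a fixed base point to derive a bounded left side against a divergent right side. The only cosmetic differences are that you absorb $C$ into $G$ (the paper keeps it outside and gets $C^{a}$ after raising to the power $a$) and you integrate from an arbitrary $r_1>t_1$ rather than the paper's choice $t_1+1$; your extra remark on convergence of the integral near $t_1$ when $t_1=t_0$ and $b<0$ is a nice touch but not strictly needed.
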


\begin{proof}
Suppose there exist $C>0, a>1, b\geq -1$ such that (\ref{intineq}) holds, then one defines $J:[t_1+1,\infty)\rightarrow(0,\infty)$ by \[J(r)=\int_{t_1}^{r}H^{a}(\a)(\a-t_0)^{b}\,d\a.\]
Now for any $r\geq t_1+1$, it follows from (\ref{intineq}) that $0<J(r)\leq \frac{H(r)}{C}$ and therefore
\[J'(r)=H^{a}(r)(r-t_0)^{b}\geq C^{a}J^{a}(r)(r-t_0)^{b}.\]
As a result, for any $r_0>t_1+1$,
\[\int_{t_1+1}^{r_0}\frac{J'(r)}{J^{a}(r)}\,dr\geq C^{a}\int_{t_1+1}^{r_0}(r-t_0)^{b}\,dr.\]
Now the left hand side is bounded by 
$$\frac{J^{1-a}(t_1+1)}{a-1} $$
which is a fixed number. However, the Right Hand Side $\rightarrow\infty$ when $r_0\rightarrow\infty$, since 
$b\geq -1$.  Thus, the Lemma follows.
\end{proof}
Later in Section \ref{Using G's type ineq.}, in order to prove the finite time blow-up, the goal is to construct a function $H$, associated with the solution of (\ref{Prob}), which satisfies (\ref{intineq}) with $a,b$ related to the exponent $p$.

\section{Proof of Theorem \ref{keythm}}
\label{proof of keythm}
\subsection{Set-up steps}
\label{Set-up steps}

To verify Theorem \ref{keythm}, first of all, without loss of generality, one can assume $x_0={\bf 0}$ to be the origin in  $\m{R}^{3}$, otherwise just doing a translation. In addition, we can suppose $A=1$, otherwise just doing a dilation.

Now using proof by contradiction, one assumes $\text{supp}\; u$ is not in $\Gamma^{-}({\bf 0},t_{0})$, then there exists $(x^{1},t_{1})\notin\Gamma^{-}({\bf 0},t_{0})$ but $u(x^{1},t_{1})\neq 0$. Set $t_{2}=t_{1}+|x^{1}|$, 
then $t_2>t_0$ and therefore $({\bf 0},t_2)\in\Gamma^{+}({\bf 0},t_{0})$. 

Since $u^{0}\geq 0$ in $\Gamma^{+}({\bf 0},t_{0})$ and $({\bf 0},t_2)\in\Gamma^{+}({\bf 0},t_{0})$, then $u^{0}\geq 0$ in $\Gamma^{+}({\bf 0},t_{2})$. As a consequence, for any $0\leq r\leq t-t_2$,
\begin{equation}
\bar{u}(r,t)=\overline{u^0}(r,t)+\overline{u^1}(r,t)\geq \overline{u^1}(r,t).
\end{equation}
Noticing the fact that  $P$ is positive and the assumption $\Box u^1\geq |u|^{p}$ in Theorem \ref{keythm}, it follows from (\ref{inhomo. soln. rep. by P}) that
\begin{equation}\label{rad. ave. point est. 1}
\bar{u}(r,t)\geq\overline{u^1}(r,t)\geq P\big(\overline{|u|^{p}}\big)(r,t)=\iint\limits_{R_{r,t}}\frac{\lam}{2r}\,\overline{|u|^{p}}(\lam,s)\,d\lam\,ds.
\end{equation}
Because of the simple fact $\overline{|u|^{p}}(\lam,s)\geq |\bar{u}(\lam,s)|^{p}$, one has that

\begin{equation}\label{rad. ave. point est. 2}
\bar{u}(r,t)\geq \iint\limits_{R_{r,t}}
\frac{\lam}{2r}|\bar{u}(\lam,s)|^p\,d\lam\,ds, \quad\forall\,0\leq r\leq t-t_2.
\end{equation}
From (\ref{rad. ave. point est. 1}), we have 
\begin{equation}\label{positive on s=t_2+lambda}
\bar{u}(\delta, t_2+\delta)>0, \quad\forall\,\delta>0,
\end{equation}
since the point $(|x^{1}|,t_{1})$ lies in $R_{\delta,t_2+\delta}$ and $u(x^{1},t_1)\neq 0$.

Now one fixes $t_2$ and a positive number $\delta$, then considers the regions (See Figure \ref{fig second})
\begin{figure}[htbp]
\centering
\includegraphics[width=0.8\linewidth]{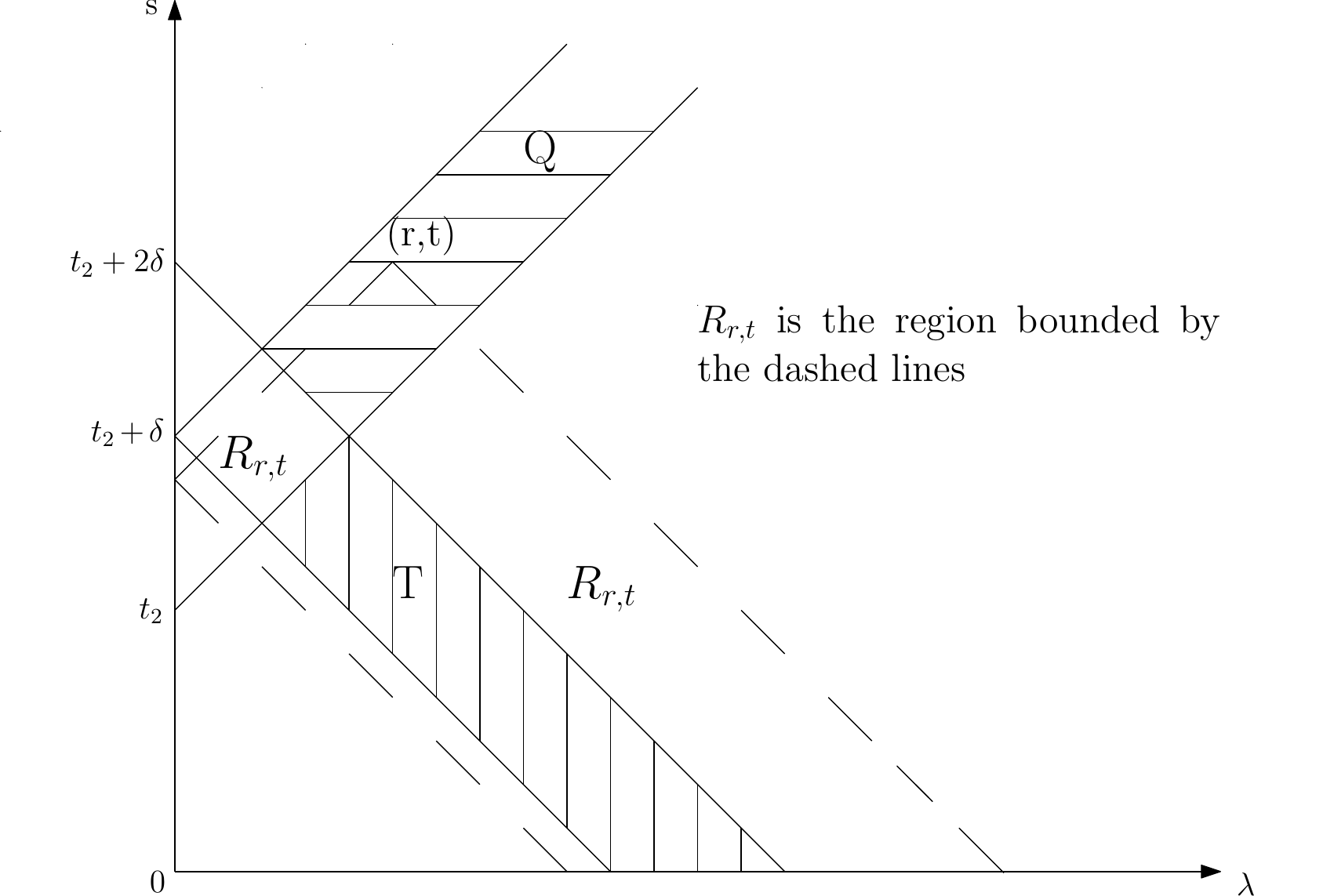}
\caption{$T$ and $Q$}
\label{fig second}
\end{figure}
\[T=\{(\lam,s):t_2+\delta\leq s+\lam\leq t_2+2\delta, s-\lam\leq t_2, s\geq 0\},\]
\[Q=\{(\lam,s):t_2+2\delta\leq s+\lam, t_2\leq s-\lam\leq t_2+\delta\}.\]

It is easy to check that the fixed region $T\subset R_{r,t}$ for any $(r,t)\in Q$. Then it follows from (\ref{rad. ave. point est. 2}) that for any $(r,t)\in Q$,
\begin{equation}\label{averpointest}
\bar{u}(r,t)\geq \iint\limits_{T}\frac{\lam}{2r}|\bar{u}(\lam,s)|^{p}\,d\lam\,ds=\frac{M}{r},
\end{equation}
where $M$ is a positive constant due to (\ref{positive on s=t_2+lambda}).

Let $\Sigma=\{(r,t):0\leq r\leq t- t^{*}\}$, where $t^{*}\triangleq t_2+2\delta$. For any $(r,t)\in\Sigma$, one defines the sets (See Figure \ref{fig third})
\begin{figure}[htbp]
\centering
\includegraphics*[width=1\linewidth]{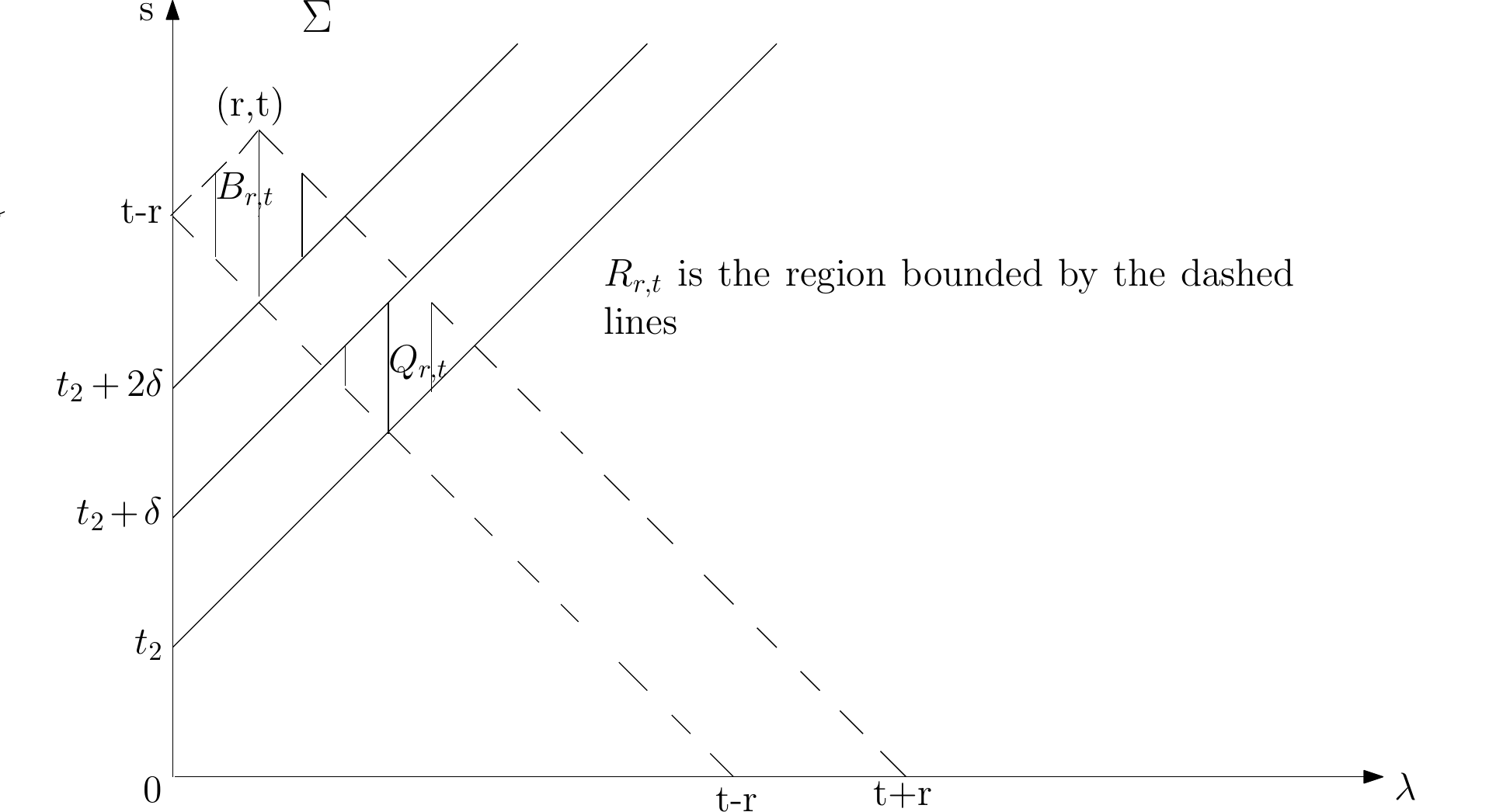}
\caption{$Q_{r,t}$ and $B_{r,t}$}
\label{fig third}
\end{figure}
\[Q_{r,t}=\{(\lam,s):t-r\leq \lam+s\leq t+r,\; t_2\leq s-\lam\leq t_2+\delta\},\]
\[B_{r,t}=\{(\lam,s):t-r\leq \lam+s\leq t+r,\; t_2+2\delta\leq s-\lam\leq t-r\}.\]

Then for any $(r,t)\in\Sigma$,  we get
\begin{equation}\label{graphinfo3}
Q_{r,t}\subset R_{r,t},\;B_{r,t}\subset R_{r,t},\;Q_{r,t}\subset Q,\;B_{r,t}\subset \Sigma.
\end{equation}
Thus from (\ref{rad. ave. point est. 2}), (\ref{averpointest}), (\ref{graphinfo3}), one obtains
\begin{eqnarray*}
\bar{u}(r,t) &\geq & \iint\limits_{Q_{r,t}}\frac{\lam}{2r}|\bar{u}(\lam,s)|^{p}\,d\lam\,ds\\
&\geq & \iint\limits_{Q_{r,t}}\frac{\lam}{2r}\frac{M^p}{\lam^p}\,d\lam\,ds\\
&=& \frac{M^p}{2r}\iint\limits_{Q_{r,t}}\lam^{1-p}\,d\lam\,ds.
\end{eqnarray*}
The area of $Q_{r,t}$ is $r\delta$ and for any point $(\lambda,s)$ in it, $\lam\leq t+r$. As a result, 
\begin{equation}\label{betteraverpointest}
\bar{u}(r,t)\geq \frac{M^p}{2r}\, r\delta\, (t+r)^{1-p}=
C_0(t+r)^{1-p}, \quad\forall\,(r,t)\in\Sigma,
\end{equation}
where $C_0=M^{p}\delta/2$ is a positive constant.

\subsection{Using Gronwall's type inequality}
\label{Using G's type ineq.}
Until now, the ideas are natural and all the estimates are not difficult to get. But after this step, \cite{John} claims an induction:
\[\bar{u}(r,t)\geq C_{k}(t+r)^{-1}(t-r- t^{*})^{a_{k}}(t-r)^{-b_{k}},\quad\forall\,(r,t)\in\Sigma,\quad \forall\, k\geq 1,\]
where $a_{k}, b_{k}, c_{k}$  satisfy complex recurrence formulas. By tedious computations, one finds that $(t-r- t^{*})^{a_{k}}$ is the dominant term and $a_{k}\rightarrow\infty$, then the blow-up follows when taking $(t-r- t^{*})$ to be a fixed large number and $k\rightarrow\infty$.

In the following, we will carry out a much more concise argument by introducing a suitable nonlinear functional and making use of Lemma \ref{intineqlemma}.

Firstly, we observe from (\ref{rad. ave. point est. 2}) and (\ref{graphinfo3}) that for any $(r,t)\in\Sigma$, 
\begin{equation}\label{ineq 1}
\bar{u}(r,t)\geq \iint\limits_{R_{r,t}}\frac{\lam}{2r}|\bar{u}(\lam,s)|^{p}\,d\lam\,ds
\geq \iint\limits_{B_{r,t}}\frac{\lam}{2r}|\bar{u}(\lam,s)|^{p}\,d\lam\,ds,
\end{equation}
which implies $\bar{u}\geq 0$ on $\Sigma$ and especially by (\ref{graphinfo3}), $\bar{u}\geq 0$ on $B_{r,t}$. So we can remove the absolute value sign in the last term of (\ref{ineq 1}) to get
\begin{equation}\label{posaverineq}
\bar{u}(r,t)\geq \iint\limits_{B_{r,t}}\frac{\lam}{2r}\,\bar{u}^{p}(\lam,s)\,d\lam\,ds,\quad\forall\, (r,t)\in \Sigma.
\end{equation}
By change of variable: $\a=\lam+s$ and $\b=s-\lam$, we obtain
\begin{equation}\label{posaverineqcov}
\bar{u}(r,t)\geq \frac{1}{8r}\int_{t-r}^{t+r}\int_{ t^{*}}^{t-r}(\a-\b)\bar{u}^{p}
\bigg(\frac{\a-\b}{2},\frac{\a+\b}{2}\bigg)\,d\b\,d\a.
\end{equation}

Here comes an important observation by considering $F:\Sigma'\rightarrow\m{R}$, where $F(\a,\b)\triangleq\bar{u}\big(\frac{\a-\b}{2},\frac{\a+\b}{2}\big)$ and $\Sigma'=\{(r,t): t^{*}\leq t\leq r\}$ corresponding to the definition of $\Sigma$. Now 
(\ref{betteraverpointest}) becomes
\begin{equation}\label{pointestcov}
F(r,t)\geq C_0\, r^{1-p}, \quad \forall\, (r,t)\in\Sigma'.
\end{equation}
Moreover, (\ref{posaverineqcov}) becomes 
\begin{equation}\label{pointestintcov}
F(r,t)\geq \frac{1}{4(r-t)}\int_{t}^{r}\int_{ t^{*}}^{t}(\a-\b)F^{p}(\a,\b)\,d\b\,d\a, \quad \forall\, (r,t)\in\Sigma'.
\end{equation}

From here, it attempts to employ the Gronwall's inequality technique, which reduces the blow-up problem to a pure analysis technique. This is our motivation. 

However we can not apply it directly since Gronwall's inequality only deals with single variable and the right hand side being a double integral. In addition, (\ref{pointestintcov}) involves some weight functions. To overcome these difficulties, we introduce some new functions and make use of Lemma \ref{intineqlemma} together with (\ref{pointestcov}).

For $q\geq 1$ to be determined later, we define $G:\Sigma'\rightarrow \m{R}$ by $$G(r,t)=(r-t)^{q}F(r,t).$$ 
From (\ref{pointestintcov}), 
\begin{equation}\label{G1ineq}
G(r,t)\geq\frac{1}{4}(r-t)^{q-1}\int_{t}^{r}\int_{ t^{*}}^{t}G^{p}(\a,\b)(\a-\b)^{1-qp}\,d\b\,d\a.
\end{equation}
We define $H:[ t^{*},\infty)\rightarrow\m{R}$ by $$H(r)=\int_{ t^{*}}^{r}G(r,t)\,dt$$
and integrate (\ref{G1ineq}) for $t$ from $ t^{*}$ to $r$. Then the left hand side of (\ref{G1ineq}) becomes $H(r)$. In order to exploit Lemma \ref{intineqlemma}, the right hand side, hopefully after changing the order of integration, can become a single integral of $H(r)$. We calculate as follows,
\begin{eqnarray*}
H(r) &\geq & \frac{1}{4}\int_{ t^{*}}^{r}\int_{t}^{r}\int_{ t^{*}}^{t}G^p(\a,\b)(\a-\b)^{1-qp}
(r-t)^{q-1}\,d\b\,d\a\,dt\\
&=& \frac{1}{4}\int_{ t^{*}}^{r}\int_{ t^{*}}^{\a}G^p(\a,\b)(\a-\b)^{1-qp}\int_{\b}^{\a}(r-t)^{q-1}
\,dt\,d\b\,d\a\\
&=& \frac{1}{4q}\int_{ t^{*}}^{r}\int_{ t^{*}}^{\a}G^p(\a,\b)(\a-\b)^{1-qp}\big[(r-\b)^q-(r-\a)^q\big]
\,d\b\,d\a.
\end{eqnarray*}
Since $q\geq 1$, then $(r-\b)^q-(r-\a)^q\geq (\a-\b)^q$. As a result,
\begin{equation}
H(r)\geq \frac{1}{4q}\int_{ t^{*}}^{r}\int_{ t^{*}}^{\a}G^p(\a,\b)(\a-\b)^{1-qp+q}\,d\b\,d\a.
\end{equation}
We choose $q=p/(p-1)$ and denote $C$ to be a constant which is independent of the variable $r$ but may be different from line to line, then 
\begin{equation}\label{H1ineq}
H(r)\geq C\int_{ t^{*}}^{r}\int_{ t^{*}}^{\a}
G^p(\a,\b)(\a-\b)^{1-p}\,d\b\,d\a.
\end{equation}
Using Holder's inequality, 
\begin{eqnarray*}
H(\a) &=& \int_{ t^{*}}^{\a}G(\a,\b)\,d\b \\
&=& \int_{ t^{*}}^{\a}G(\a,\b)(\a-\b)^{\frac{1-p}{p}}(\a-\b)^{\frac{p-1}{p}}\,d\b \\
&\leq & \bigg(\int_{ t^{*}}^{\a}G^p(\a,\b)(\a-\b)^{1-p}\,d\b\bigg)^{\frac{1}{p}}
\bigg(\int_{ t^{*}}^{\a}(\a-\b)\,d\b\bigg)^{\frac{p-1}{p}}.
\end{eqnarray*}
So
\begin{align*}
\int_{ t^{*}}^{\a}G^p(\a,\b)(\a-\b)^{1-p}\,d\b &\geq 
H^p(\a)\,\bigg(\int_{ t^{*}}^{\a}(\a-\b)\,d\b\bigg)^{1-p}\\
&= C\,H^p(\a)\,(\a- t^{*})^{2-2p}.
\end{align*}
Plugging in (\ref{H1ineq}) gives 
\begin{equation}\label{Hineqsingle}
H(r)\geq C\,\int_{ t^{*}}^{r}H^p(\a)(\a- t^{*})^{2-2p}\,d\a,\quad\forall r\geq  t^{*}.
\end{equation}

Now it is almost done! Only trouble for applying Lemma \ref{intineqlemma} is that $2-2p$ may be less than $-1$. In order to raise the power of $\a- t^{*}$, we borrow the idea from \cite{Sideris}. Namely, we write $H^p(\a)=H^{1+\v}(\a)\,H^{p-1-\v}(\a)$ and hope to find $H(\a)\geq C (\a- t^{*})^{d}$ for some $d$, which can increase the power of $(\a- t^{*})$ by $(p-1-\v)d$.

From (\ref{pointestcov}) and the definition of $G$ and $H$, we have for any $\a\geq 2 t^{*}$, 
\begin{eqnarray}
H(\a)= \int_{ t^{*}}^{\a}G(\a,\b)\,d\b &\geq & C_0\int_{ t^{*}}^{\a}(\a-\b)^{q}\,\a^{1-p}\,d\b \nonumber \\
&=& \frac{C_0}{q+1}\,\alpha^{1-p}(\alpha-t^{*})^{q+1} \nonumber \\
&\geq & \frac{C_0}{q+1}\,2^{1-p}(\alpha-t^{*})^{1-p}(\alpha-t^{*})^{q+1} \nonumber\\
&=& C\,(\a- t^{*})^{2-p+q}.
\end{eqnarray}
Getting back to (\ref{Hineqsingle}), for any $r\geq 2 t^{*}$,  one gets 
\begin{align*}
H(r) &\geq C\int_{2 t^{*}}^{r}H^{1+\v}(\a)\,H^{p-1-\v}(\a)\,(\a- t^{*})^{2-2p}\,d\a \\
&\geq C\int_{2 t^{*}}^{r}H^{1+\v}(\a)\,(\a- t^{*})^{s(p,\v)}\,d\a, 
\end{align*}
where 
\begin{align*}
s(p,\v)&=(p-1-\v)(2-p+q)+2-2p\\
&=-p^2+2p-\v\bigg(2-p+\dfrac{p}{p-1}\bigg).
\end{align*}

Since $1<p<1+\sqrt{2}$, then $-p^2+2p>-1$ and
therefore it is possible to choose small $\v$ in $(0,p-1)$ such that $s(p,\v)\geq -1$.

Now applying Lemma \ref{intineqlemma} with $t_0= t^{*},\,t_1=2 t^{*},\,a=1+\v,\,b=s(p,\v)$, we get contradiction. Thus we reject the assumption $\text{supp}\; u \nsubseteq\Gamma^{-}({\bf 0},t_{0})$, hence Theorem \ref{keythm} follows.

\section*{Acknowledgements}
The authors thank the referees for their suggestions which make this work more clearly.





\newpage

\end{document}